\date{}
\newcommand*{\qp}{\ensuremath{\mathbb{Q}_p}}                     
\newcommand*{\zp}{\ensuremath{\mathbb{Z}_p}}
\DeclareMathOperator{\Hom}{Hom}
\DeclareMathOperator{\emorph}{End}
\DeclareMathOperator{\GL}{GL}
\begin{document}

\title{Endomorphism algebras of admissible $p$-adic representations of $p$-adic Lie groups\footnote{{\it 2010 Mathematics Subject Classification}. 20G05, 20G25, 11E57, 11E95}}
\author{Gabriel Dospinescu\footnote{CMLS, \'Ecole Polytechnique, {\it Adresse e-mail :} {\ttfamily gabriel.dospinescu@math.polytechnique.fr}}, Benjamin Schraen\footnote{CNRS-UVSQ, {\it Adresse e-mail :} {\ttfamily benjamin.schraen@uvsq.fr}}}
\maketitle

\begin{abstract}
\footnotesize

 Building on recent work of Ardakov and Wadsley, we prove Schur's lemma for absolutely irreducible admissible $p$-adic Banach space
  (respectively locally analytic) representations of $p$-adic Lie groups.
We also prove finiteness results for the endomorphism algebra of an irreducible admissible
representation.
 \end{abstract}

\newtheoremstyle{theorem}{11pt}{11pt}{\slshape}{}{\bfseries}{.}{.5em}{}
\newtheoremstyle{note}{11pt}{11pt}{}{}{\bfseries}{.}{.5em}{}

\theoremstyle{plain}
  \newtheorem{theorem}[subsection]{Theorem}
  \newtheorem{proposition}[subsection]{Proposition}
  \newtheorem{lemma}[subsection]{Lemma}
  \newtheorem{corollary}[subsection]{Corollary}

\theoremstyle{definition}
  \newtheorem{definition}[subsection]{Definition}
  \newtheorem{notation}{Notation}

\theoremstyle{remark}
  \newtheorem{example}[subsection]{Example}
  \newtheorem{remark}[subsection]{Remark}
  \newtheorem{remarks}[subsection]{Remarks}
  \newtheorem{exercise}[subsection]{Exercise}
  \newtheorem{conjecture}[subsection]{Conjecture}

\section{Introduction}

  In a series of papers \cite{ST1}, \cite{ST2}, \cite{ST3}, \cite{ST4}, Schneider and Teitelbaum defined good abelian categories of representations of $\qp$-analytic groups on locally convex $p$-adic vector spaces, by imposing suitable "admissibility" conditions. These representations
 play a crucial role in the, still in its infancy, $p$-adic local Langlands correspondence: deep results of Berger, Breuil, Colmez, Emerton, Kisin and Pa\v{s}k\={u}nas (see \cite{BB}, \cite{C2}, \cite{C5}, \cite{KiAst}, \cite{Pa}, to cite only a few) show that the study of irreducible unitary admissible Banach space representations for $\rm{GL}_2(\qp)$ comes down to the study of
 irreducible continuous Galois representations $\rho:\rm{Gal}(\overline{\qp}/\qp)\to \rm{GL}_2(\overline{\qp})$.
 This can be seen as an evidence that these representations are more subtle than their classical analogues
 (admissible smooth representations or Harish-Chandra modules). For instance,
 a basic result such as Schur's lemma was unknown in the $p$-adic Banach or locally analytic setting, while
 the corresponding results for admissible smooth representations or Harish-Chandra modules are rather elementary.
 In this paper, we prove Schur's lemma in complete generality, for (not necessarily unitary) irreducible admissible
 Banach space and locally analytic representations of $\qp$-analytic groups. This
  was known only for unitary irreducible Banach space representations
   of $\rm{GL}_2(\qp)$, having a central character and, for technical reasons, one also needs to assume that $p>3$. This was proved by Pa\v{s}k\={u}nas in \cite{Pa} in a highly indirect way,
 by fully using the deformation-theoretic aspects of the $p$-adic local Langlands correspondence and massive computations of extensions between smooth mod $p$ representations.

   To be more precise, let $p$ be a prime number, let $L$ be a finite extension of
   $\qp$ and let $G$ be a $\qp$-analytic group of finite dimension
   (for example, the $\qp$-points of an algebraic group over $\qp$). A Banach space representation
   $\Pi$ of $G$ over $L$ is an $L$-Banach space on which $G$ acts by
   $L$-linear continuous automorphisms. If $H$ is an open compact subgroup of $G$, let $C(H,L)$ be the Banach space of $L$-valued continuous
   maps on $H$. We say that $\Pi$ is admissible
   if one can find
   a closed $H$-equivariant embedding of $\Pi$ into $C(H,L)^n$ for some $n\geq 1$ and some $H$.
   Schneider and Teitelbaum also defined the notion of locally $\qp$-analytic representations of
   $G$ and that of admissibility for these representations. For the rather technical definitions,
   see section \ref{prelim}. Our main result is then:

   \begin{theorem}\label{Schur}
    Let $\Pi$ be a topologically irreducible admissible Banach space (resp. locally $\qp$-analytic) representation
    of a $\qp$-analytic group $G$ over $L$. Then

    1) The algebra $\rm{End}_{L[G]}(\Pi)$ of continuous $G$-equivariant
    endomorphisms of $\Pi$ is a finite dimensional division algebra over $L$.

    2) $\Pi$ is absolutely irreducible (i.e. $\Pi\otimes_{L} L'$ is topologically irreducible
    for any finite extension $L'$ of $L$) if and only if $\rm{End}_{L[G]}(\Pi)=L$.

    3) There exists a finite Galois extension $L'/L$ and finitely many absolutely
    irreducible admissible $L'$-Banach space (resp. locally $\qp$-analytic) representations
    $\Pi_1,\Pi_2,...,\Pi_s$ such that $\Pi\otimes_L L'\simeq \Pi_1\oplus\Pi_2\oplus...\oplus \Pi_s$.
   \end{theorem}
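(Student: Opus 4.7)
The strategy is to work dually via Schneider--Teitelbaum theory, converting Schur's lemma into a module-theoretic question over the Iwasawa or Fr\'echet--Stein algebra of a compact open subgroup, and then invoking a Quillen-type lemma from the recent work of Ardakov--Wadsley.

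Fix a compact open subgroup $H \subseteq G$. Passing to the continuous dual, $M := \Pi'$ is finitely generated over the Iwasawa algebra $\Lambda_L(H) := L \otimes_{\mathcal{O}_L} \mathcal{O}_L[[H]]$ in the Banach case, or coadmissible over the Fr\'echet--Stein distribution algebra $D(H,L)$ in the locally analytic case; and $D := \mathrm{End}_{L[G]}(\Pi)$ identifies with the opposite of the subalgebra of module endomorphisms of $M$ that commute with the residual $G$-action. For assertion (1), if $0 \neq f \in D$ then topological irreducibility forces $\ker f = 0$ and $\overline{\mathrm{im}\, f} = \Pi$, so dually $f' \colon M \to M$ has dense image. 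Every submodule of a finitely generated $\Lambda_L(H)$-module being automatically closed (a basic result of Schneider--Teitelbaum) forces $f'$ to be surjective, and then Noetherianity rules out $\ker f' \neq 0$ by stabilisation of the chain $\ker (f')^n$. Hence $f'$ is an isomorphism and so is $f$ by the open mapping theorem; the locally analytic case is parallel via the Fr\'echet--Stein formalism.

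The core of the argument is finite-dimensionality of $D$. The plan is to invoke the Ardakov--Wadsley Quillen lemma: the endomorphism algebra of a simple finitely generated $\Lambda_L(H)$-module (respectively a simple coadmissible $D(H,L)$-module) is a finite-dimensional division algebra over $L$. By Noetherianity, $M$ admits a finite composition series with simple subquotients $M_1, \ldots, M_n$, and the goal is to embed $D$ into a finite product of the finite-dimensional algebras $\mathrm{End}(M_i)$. The \emph{main obstacle} is that $D$ does not a priori preserve this composition series, as $\Pi$ is only $G$-irreducible and need not be $H$-irreducible. The resolution I envisage is a semisimplification argument: pass to the associated graded of $M$, and use admissibility to bound the number of isomorphism classes of simple subquotients, so that the $G$-action, and hence $D$, permutes only finitely many isotypic components of the graded in a controlled way; this yields the sought embedding of $D$ into a finite-dimensional $L$-algebra.

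Once $D$ is a finite-dimensional division algebra over $L$, assertions (2) and (3) follow by standard Galois descent. A finite Galois extension $L'/L$ splits the center of $D$ and realises $D \otimes_L L' \simeq \prod_{i=1}^s M_{n_i}(L')$; the central primitive idempotents of $D \otimes_L L'$ yield a $G$-equivariant decomposition $\Pi \otimes_L L' = \bigoplus_i \Pi_i$ into closed admissible summands, each absolutely irreducible with endomorphism algebra $L'$. This proves (3), and (2) follows because absolute irreducibility of $\Pi$ is equivalent to $s = 1$ and $n_1 = 1$, i.e.\ to $D = L$.
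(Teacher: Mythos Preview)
Your argument that $D=\mathrm{End}_{L[G]}(\Pi)$ is a division algebra is fine, but the core step---finite-dimensionality of $D$---has genuine gaps, and the paper's route is quite different.

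First, your composition-series strategy does not get off the ground. The Iwasawa algebra $\Lambda_L(H)$ is Noetherian but far from Artinian (for uniform $H$ it has Krull dimension $\dim H$), so a finitely generated $\Lambda_L(H)$-module $M=\Pi'$ need \emph{not} admit a finite composition series; the same holds for coadmissible $D(H,L)$-modules. Second, even granting such a filtration, $D$ does not preserve it, and an endomorphism of $M$ does not induce one of $\mathrm{gr}\,M$; your ``semisimplification'' paragraph does not explain how to overcome this, and I do not see a way to make it work. Third, and most seriously, the Ardakov--Wadsley result (their Corollary~8.6) applies to algebras whose associated graded is commutative of a specific shape; as the paper notes explicitly in the remark following Theorem~3.1, completed group algebras do \emph{not} satisfy these hypotheses. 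So you cannot invoke their Quillen lemma directly for simple $\Lambda_L(H)$-modules.

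The paper circumvents all of this by a two-step argument. Step one: show every $f\in D$ is \emph{algebraic} over $L$. In the Banach case one passes to locally analytic vectors (Schneider--Teitelbaum's density theorem) to land in $\mathcal{D}_r(H,L)$, whose graded ring is a polynomial algebra over the residue field; there Ardakov--Wadsley does apply, and one deduces $f'_r$---hence $f$---is algebraic. Step two: prove that an algebraic Fr\'echet division algebra over $L$ is automatically finite-dimensional. This is a separate, nontrivial result using Baire's category theorem to produce a polynomial identity and then Kaplansky's theorem on PI division algebras to bound the dimension over the centre.

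Your outline for (2) and (3) via splitting $D\otimes_L L'$ is reasonable in spirit once (1) is known, but note that central primitive idempotents alone give summands $\Pi_i$ with $\mathrm{End}_{L'[G]}(\Pi_i)\simeq M_{n_i}(L')$, not $L'$; you still need the full matrix idempotents (or an inductive argument) to reach absolutely irreducible pieces. The paper handles (2) and (3) by a direct Galois-descent argument showing $\Pi\otimes_L L'$ contains an irreducible closed subrepresentation and is then a finite sum of such; the locally analytic case requires an extra Zorn-type argument at the level of $\mathcal{D}_r$-modules, since coadmissible modules need not contain simple submodules.
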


   An obvious consequence of the theorem is that absolutely irreducible
   admissible Banach space (resp. locally $\qp$-analytic) representations of $G$ have a central character (resp. a central
   character and an infinitesimal character). Soon after the paper was written, Ardakov and Wadsley informed us that they also have a proof of this result,
   which, as far as we know, was unknown even for $\rm{GL}_2(\qp)$.
    The hardest step of the proof of theorem \ref{Schur} is the fact that any
      $f\in \rm{End}_{L[G]} (\Pi)$ is algebraic over $L$.
      This uses basic structural results of Schneider and Teitelbaum \cite{ST3} (building on the seminal
      work of Lazard \cite{Laz}) and most crucially a deep analogue of Quillen's lemma \cite{Qu} for almost commutative affinoid $L$-algebras, due to
      Ardakov and Wadsley \cite{AW}. The finiteness (as $L$-vector space) of
    $\rm{End}_{L[G]}(\Pi)$ is a rather tricky combination of Baire's lemma and of a
    delicate theorem of Kaplansky \cite{Ka} on PI-division algebras. The second and third
    part of the theorem are directly inspired from \cite{Pa}, where the case of Banach space
    representations of $\rm{GL}_2(\qp)$ is treated. The case of general Banach space representations
    follows precisely the same lines once the first part of the theorem is proved. The arguments
    for the case of locally analytic representations are however trickier.

     We end this introduction by stating a conjecture which lies probably very deep, but which is really one of the results that one
    would be happy to have in the theory of $p$-adic representations of $p$-adic Lie groups.

     \begin{conjecture}\label{conj}
     Let $\Pi$ be an absolutely irreducible admissible $L$-Banach space representation of $G$.
     Then the space $\Pi_{\rm an}$ of locally analytic vectors in $\Pi$ is a $G$-representation
     (topologically) of finite length, having only scalar continuous equivariant endomorphisms
     (hence having an infinitesimal character).
     \end{conjecture}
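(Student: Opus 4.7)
The plan is to pass to the dual side via the Schneider--Teitelbaum anti-equivalence: set $M = (\Pi_{\mathrm{an}})'_b$, a coadmissible module over the Fr\'echet--Stein algebra $D(G,L)$. The conjecture translates to: $M$ has finite length in the category of coadmissible $D(G,L)$-modules, and $\mathrm{End}_{D(G,L)}(M) = L$. The infinitesimal character claim is then automatic, since $Z(U(\mathfrak{g}_L)) \subset D(G,L)$ must act by scalars once equivariant endomorphisms are forced to. The essential extra rigidity of $M$, which one must leverage, is that it is not an arbitrary coadmissible module: restricted to the Iwasawa algebra $\Lambda_H := L \otimes_{\zp} \zp[[H]]$ for a uniform open $H \leq G$, $M$ is finitely generated, being the dual of the admissible Banach representation $\Pi$.

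\textbf{Finite length --- the main obstacle.} Write $D(G,L) = \varprojlim_r D_r(G,L)$ as a Fr\'echet--Stein inverse limit of almost commutative affinoid $L$-algebras in the sense of Ardakov--Wadsley. Each $M_r := D_r(G,L) \otimes_{D(G,L)} M$ is finitely generated, so has a well-defined Gelfand--Kirillov dimension and a finite composition length. The strategy is to bound these composition lengths uniformly in $r$, using Venjakob's dimension theory applied to $\Pi'_b$ as a $\Lambda_H$-module together with a Bernstein inequality on $D_r(G,L)$, and then to glue the resulting Jordan--H\"older data as $r \to 1^-$ into a finite filtration of $M$. This is where the proof would genuinely stall: no general representation-theoretic mechanism is known to transport composition-length bounds across the highly non-trivial change of algebras $\Lambda_H \to D_r(G,L)$. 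In the only case where the conclusion is known in full, $G = \mathrm{GL}_2(\qp)$, Colmez obtained it through the $p$-adic local Langlands correspondence and $(\varphi,\Gamma)$-modules rather than by any argument intrinsic to $\Pi_{\mathrm{an}}$, and I would expect any proof for general $G$ to require comparably deep arithmetic input.

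\textbf{Scalar endomorphisms.} Granting finite length, Theorem \ref{Schur} in the locally analytic case, applied to each simple subquotient $S_i$ of $M$, gives $\mathrm{End}(S_i)$ a finite-dimensional division algebra over $L$. The restriction map $L = \mathrm{End}_{L[G]}(\Pi) \hookrightarrow \mathrm{End}_{L[G]}(\Pi_{\mathrm{an}})$ provided by Theorem \ref{Schur}(2) applied to the absolutely irreducible Banach $\Pi$ shows the right-hand side contains $L$; the task is to prove equality. I would try to adapt the proof of Theorem \ref{Schur}(1) directly to $\Pi_{\mathrm{an}}$: any continuous $G$-equivariant $f$ is algebraic over $L$ by the Ardakov--Wadsley Quillen lemma applied to the finitely generated $D_r(G,L)$-modules $M_r$, so $L[f]$ is finite-dimensional and commutative; acting on the finite-length module $M$, and using the absolute irreducibility of the ambient Banach $\Pi$ as rigidity, one would try to show that the only possible action is scalar. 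This step is subsidiary to finite length but itself non-trivial: it amounts to ruling out continuous endomorphisms of $\Pi_{\mathrm{an}}$ that do not extend to $\Pi$, and controlling this likely requires genuine information on non-split extensions between the simple subquotients $S_i$, which is not provided by the methods of this paper.
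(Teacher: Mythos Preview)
The statement you are addressing is explicitly labelled a \emph{conjecture} in the paper, and the paper offers no proof of it. On the contrary, the authors write immediately afterwards that ``to our knowledge, there is not a single non abelian $p$-adic Lie group for which the conjecture is known,'' and that even for $\mathrm{GL}_2(\qp)$ only partial results (existence of an infinitesimal character under extra hypotheses, and Colmez's unpublished result on scalar endomorphisms) are available, obtained by indirect Galois-theoretic means. So there is nothing in the paper to compare your proposal against.

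Your proposal is, to your credit, candid about this: you identify the finite-length claim as the main obstruction, locate precisely where a direct approach via the $D_r$-theory stalls (uniform composition-length bounds across the change of algebras $\Lambda_H \to D_r(G,L)$), and correctly note that the known $\mathrm{GL}_2(\qp)$ evidence comes from arithmetic input external to the representation theory of $\Pi_{\mathrm{an}}$. This matches the authors' own assessment. Your discussion of the scalar-endomorphism part is also accurate in spirit: algebraicity of each $f$ over $L$ does follow from the Ardakov--Wadsley input exactly as in the paper's Proposition~\ref{alg}, but ruling out non-scalar endomorphisms of $\Pi_{\mathrm{an}}$ that fail to extend to $\Pi$ is a genuine further difficulty not addressed by the paper's methods.

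In short: there is no proof in the paper, your proposal is not a proof either, and you have correctly identified why.
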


   The subtle point is that $\Pi_{\rm an}$ is not generally irreducible. This is already seen for
   parabolic inductions (\cite{ST5}). Much more delicate examples of this rather surprising situation come
   from the $p$-adic Langlands correspondence for $\rm{GL}_2(\qp)$ (see \cite{C1}, \cite{D3}, \cite{LXZ}).
   To our knowledge, there is not a single non abelian $p$-adic Lie group for which the conjecture is known.
  If $G=\rm{GL}_2(\qp)$ and if $\Pi$ has a stable lattice whose reduction has
  finite length (this is automatic if $p>3$ and if $\Pi$ is unitary, by a deep theorem of Pa\v{s}k\={u}nas \cite{Pa}),
  one can show the existence of an infinitesimal character for $\Pi_{\rm an}$
  by a very indirect Galois-theoretic argument (one needs to combine results of \cite{D1}, \cite{D2}, which build on \cite{C5} and \cite{Pa}). Also, Colmez proved that in this case all continuous equivariant endomorphisms are
  scalar (unpublished).

\subsection{Notations and conventions}

  We fix a finite extension $L$ of $\qp$ and we let $O_L$ be its ring of integers. The norm $| \cdot |$ on $L$ is normalized such that $|p|=p^{-1}$. For the notions of
  $\qp$-analytic group and uniform pro-$p$ group we refer the reader to \cite{DDMS}. For basic facts of
  nonarchimedean functional analysis, see \cite{Sch}.
If $\Pi$ is a continuous representation of
$G$ on some locally convex $L$-vector space, $\rm{End}_{L[G]}(\Pi)$ will denote the $L$-algebra of continuous
$L$-linear $G$-equivariant endomorphisms of $\Pi$.

\subsection{Acknowledgements}
 The first author would like to thank K. Ardakov for kindly and quickly
answering some questions concerning \cite{AW}. He would also like to thank
Ga\"{e}tan Chenevier and Pierre Colmez for useful discussions around the results
of this article.

  \section{Preliminaries} \label{prelim}

\subsection{Duality and admissible Banach representations}

  Let $G$ be a $\qp$-analytic group and let $H$ be a compact open subgroup of $G$.
A Banach space representation of $G$ over $L$ is an $L$-Banach space
  $\Pi$ endowed with a continuous action of $G$ (thus the map $G\times \Pi\to \Pi$ is continuous). Let
$O_L[[H]]$ be the the completed group algebra of $H$ over $O_L$, that is the inverse limit of
the group algebras $O_L[H/H_1]$, where $H_1$ runs over the ordered set of compact open normal subgroups
of $H$. By a classical result of Lazard (\cite{Laz} V.2.2.4), $O_L[[H]]$ is a (left and right) noetherian algebra.
There is a natural isomorphism of $L$-topological algebras
between the algebra of $L$-valued measures $D^c(H,L)$ on $H$ (i.e. the topological dual
of the space of continuous $L$-valued functions on $H$) and $L\otimes_{O_L} O_L[[H]]$.
 Following Schneider and Teitelbaum \cite{ST1}, an $L$-Banach space representation $\Pi$ of $G$ is called admissible
  if its topological dual $\Pi'$ is a finitely generated module over
  $L\otimes_{O_L} O_L[[H]]$. This does not depend on the choice of $H$ and is equivalent
  to the existence of a closed $H$-equivariant embedding $\Pi\to C(H,L)^{n}$ for some
  $n\geq 1$ and some open compact subgroup $H$ of $G$ (\cite{Em5}, theorem 5.1.15 i)). We will constantly use the following
  basic result of Schneider and Teitelbaum (\cite{ST1}, theorem 3.5).

\begin{theorem}\label{dual1}
 The duality functor $V\to V'$ induces an anti-equivalence of categories between the category of
admissible $L$-Banach space representations of $H$ and the category of finitely generated
$L\otimes_{O_L} O_L[[H]]$-modules.
\end{theorem}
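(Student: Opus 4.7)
The plan is to construct a quasi-inverse to the duality functor $V \mapsto V'$ and verify both compositions are naturally isomorphic to the identity. The key inputs are the Noetherianity of $\Lambda := L \otimes_{O_L} O_L[[H]]$ recalled above, together with the nonarchimedean Hahn--Banach theorem, which applies here since $L$ is locally compact and hence spherically complete.

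First I would verify well-definedness on objects. Given an admissible $V$ with closed $H$-equivariant embedding $V \hookrightarrow C(H,L)^n$, the identification $C(H,L)' \cong D^c(H,L) \cong \Lambda$ and Hahn--Banach produce a surjection $\Lambda^n \twoheadrightarrow V'$, exhibiting $V'$ as a finitely generated $\Lambda$-module. Conversely, given a finitely generated $M$, Noetherianity supplies a finite presentation $\Lambda^m \xrightarrow{\varphi} \Lambda^n \to M \to 0$. Viewing the matrix $\varphi$ as an $n\times m$ matrix of measures and dualizing, one obtains a continuous $H$-equivariant map $\varphi^\vee\colon C(H,L)^n \to C(H,L)^m$; set $V_M := \ker(\varphi^\vee)$, which is a closed $H$-stable subspace of $C(H,L)^n$, hence admissible.

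Next I would check that the two functors are mutually quasi-inverse. For $M$ a finitely generated $\Lambda$-module, applying continuous duality to the exact sequence $0 \to V_M \to C(H,L)^n \xrightarrow{\varphi^\vee} C(H,L)^m$ and invoking Hahn--Banach for exactness produces $\Lambda^m \xrightarrow{\varphi} \Lambda^n \to (V_M)' \to 0$, giving $(V_M)' \cong M$. For $V$ an admissible representation, the surjection $\Lambda^n \twoheadrightarrow V'$ dual to $V \hookrightarrow C(H,L)^n$ gives, by Noetherianity, a finite presentation of $V'$, and the same dualization recovers $V \cong V_{(V')}$. Full faithfulness on morphisms is handled in the same spirit: a $\Lambda$-linear map $V' \to W'$ dualizes to a unique continuous $H$-equivariant map $W \to V$ by extending functionals via Hahn--Banach.

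The main obstacle is the functional-analytic bookkeeping: verifying that continuous duality is exact on the relevant short exact sequences of closed subspaces and quotients, and that the pairing $C(H,L) \times \Lambda \to L$ genuinely realizes each as the strong continuous dual of the other. The first rests on Hahn--Banach, and the second rests on the explicit $c_0$-type Banach structure of $\Lambda$ (coming from $O_L[[H]] = \varprojlim_{H_1} O_L[H/H_1]$) paired with the $\ell^\infty$-type structure of $C(H,L)$ arising from compactness and total disconnectedness of $H$. Once these ingredients are in hand, the anti-equivalence follows formally from Noetherianity of $\Lambda$ and the admissibility criterion.
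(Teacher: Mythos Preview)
The paper does not give its own proof of this statement: it is quoted as a preliminary result of Schneider and Teitelbaum (\cite{ST1}, theorem~3.5), and no argument is supplied beyond the citation. Your sketch is essentially the argument of that reference and is correct in outline; the construction of a quasi-inverse via a finite presentation of $M$ over the noetherian ring $\Lambda$, together with the duality $C(H,L)'\cong \Lambda$ and Hahn--Banach exactness, is exactly the mechanism used there. One small point you may want to make explicit is that the $L$-Banach space $V_M$ constructed from a presentation is independent, up to canonical isomorphism, of the choice of presentation (this is what makes $M\mapsto V_M$ a functor rather than just an assignment on objects), but this follows from the same exactness considerations you already invoke.
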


 Theorem \ref{dual1} combined with the fact that $L\otimes_{O_L} O_L[[H]]$ is noetherian implies
 that the category of admissible Banach space representations of $G$ is abelian. In particular,
continuous equivariant maps between
admissible Banach representations of $G$ are strict, with closed image.
Hence, if $\Pi_1$ and $\Pi_2$ are (topologically) irreducible admissible Banach space representations of $G$, then any nonzero $f\in
\Hom_{L[G]}(\Pi_1,\Pi_2)$ is an isomorphism (the previous discussion shows that $f$ has closed image and the topological
irreducibility of $\Pi_1$ and $\Pi_2$ forces $f$ being injective with dense image. Thus $f$ is bijective and so an isomorphism,
by the open image and closed graph theorems).

\begin{remark}
 If $G$ is compact, then topological irreducibility for an admissible Banach space
representation $\Pi$ of $G$ corresponds to the algebraic simplicity of $\Pi'$ as $D^c(G,L)$-module (\cite{ST1}, corollary 3.6). On the other hand, if $G$ is no longer compact and $\Pi$ is an irreducible admissible Banach space
representation of $G$, then
$\Pi'$ has no reason to be a simple $D^c(H,L)$ module.

\end{remark}

\subsection{Coadmissible modules and locally analytic representations}\label{coadm}

 We recall in this section a few basic facts about distribution algebras of uniform pro-$p$ groups,
 following section $4$ of \cite{ST3}. We will freely use the notions of $p$-valuation and ordered
 basis, for which we refer the reader to loc.cit. Let $G$ be a $\qp$-analytic group and let $H$ be a uniform
  open pro-$p$ subgroup of $G$ (such a subgroup always exists by \cite{Laz}).
  By section 4.2 of \cite{DDMS}, one can find an ordered basis
 $h_1,h_2,...,h_d$ of $H$ and a $p$-valuation $\omega$ on $H$ such that
 $\omega(h_i)=1$ if $p>2$ (resp. $\omega(h_i)=2$ if $p=2$). We obtain a bijective
 global chart for the manifold $H$, sending $(x_1,x_2,...,x_d)\in \zp^d$ to
$h_1^{x_1}h_2^{x_2}...h_d^{x_d}$ (of course, this is not a group homomorphism). Let $b_i=h_i-1\in L[H]$ and, if
$\alpha\in\mathbb{N}^d$, let $b^{\alpha}=b_1^{\alpha_1}b_2^{\alpha_2}...
b_d^{\alpha_d}\in L[H]$. Let $\mathcal{D}(H,L)$ be the algebra of $L$-valued distributions
on $H$, as defined in section 2 of \cite{ST2}. The theory of Mahler expansions
shows that the elements of
$\mathcal{D}(H,L)$ have unique convergent expansions $\lambda=\sum_{\alpha\in\mathbb{N}^d} c_{\alpha}b^{\alpha}$,
where $c_{\alpha}\in L$ and $\lim_{|\alpha|\to\infty} |c_{\alpha}|\cdot r^{|\alpha|}=0$ for all $r\in (0,1)$
(here $|\alpha|=\alpha_1+...+\alpha_d$). If $r\in (p^{-1}, 1)\cap p^{\mathbb{Q}}$, let $\mathcal{D}_r(H,L)$ be the completion of $\mathcal{D}(H,L)$
with respect to the norm $||\lambda||_r=\sup_{\alpha} |c_{\alpha}|\cdot r^{|\alpha|}$ (thus elements of
$\mathcal{D}_r(H,L)$ are convergent series $\sum_{\alpha} c_{\alpha} b^{\alpha}$ such that
$|c_{\alpha}| r^{|\alpha|}\to 0$ as $|\alpha|\to\infty$). In the case $p=2$, we will prefer the equivalent family of norms defined by $||\lambda||_r=\sup_{\alpha} |c_{\alpha}|\cdot r^{2 |\alpha|}$ (see \cite[Prop. $2.1$]{Schmidt}; the reason for this choice
is that we will fully use results of \cite{ST3}, where one uses the norm
$||\lambda||_r=\sup_{\alpha} |c_{\alpha}| r^{\sum \alpha_i \omega(h_i)}$; with our convention
and our choice of $\omega$, the norms $||\cdot||_r$ in this paper agree with the norms
$||\cdot||_r$ in \cite{ST3}).

 One of the main results of \cite{ST3} (theorem 4.10; the hypothesis HYP used in loc.cit is precisely
 the hypothesis that $H$ is a uniform pro-$p$ group, see the remark before lemma 4.4)
is that the natural topological isomorphism of $L$-algebras $\mathcal{D}(H,L)\simeq \varprojlim_r
\mathcal{D}_r(H,L)$ exhibits $\mathcal{D}(H,L)$ as a nuclear Fr\'echet-Stein algebra. That is,
$\mathcal{D}_r(H,L)$ are (left and right) noetherian Banach algebras and the transition maps are (right) flat
and compact. In section 3 of \cite{ST3}, Schneider
and Teitelbaum define the notion of coadmissible module for any nuclear Fr\'echet-Stein algebra.
Let us recall that any such module $M$ is a projective limit of a family $(M_r)$ (with $r\in (p^{-1},1)\cap p^{\mathbb{Q}}$),
 each $M_r$ being
 a finitely generated $\mathcal{D}_r(H,L)$-module such that for $r < r'$, $M_r=\mathcal{D}_r(H,L)
\otimes_{\mathcal{D}_{r'}(H,L)} M_{r'}$. For any coadmissible module $M$ we have
$M_r=\mathcal{D}_r(H,L) \otimes_{\mathcal{D}(H,L)} M$ and
the maps $M\to M_r$ have dense images. Any coadmissible module is naturally a nuclear Fr\'echet space and all
linear maps between coadmissible modules are automatically continuous, with closed image. Finally, the category
of coadmissible modules if abelian.




A locally $\qp$-analytic representation of $G$ is a locally
 convex $L$-vector space of compact type $V$, endowed with a separately continuous action of $G$
such that orbits maps $g \mapsto g \cdot v$ are in $C^{\rm an}(G,V)$ (see \cite{ST2} for the definition of this
space of locally $\qp$-analytic functions on $G$ in this generality). Since the space is of compact type, the action of $G$ is
automatically continuous. If $V$ is such a representation and if $H$ is an open compact subgroup of
$G$, the action of $G$ extends to a structure of $\mathcal{D}(H,L)$-module on $V'$. We say
that $V$ is admissible if $V'$ is a coadmissible $\mathcal{D}(H,L)$-module for one (equivalently, any)
compact open subgroup $H$ of $G$.


  Finally, we need to recall a fundamental result of Schneider and Teitelbaum (\cite{ST3}, theorem 7.1) on the existence of locally
$\qp$-analytic vectors in admissible Banach space representations. This will play a crucial role in the next section.
Let $\Pi$ be an admissible $L$-Banach space representation of $G$. A vector $v\in \Pi$ is called locally $\qp$-analytic if
 the map $g\to g\cdot v$ is in $\mathcal{C}^{\rm {an}}(G,\Pi)$. The subspace $\Pi_{\rm an}$ of locally $\qp$-analytic
 vectors of $\Pi$ is naturally a $G$-representation on a space of compact type if we endow it with
  the topology induced by the embedding $\Pi_{\rm an}\to C^{\rm an}(G,\Pi)$,
  which sends a vector to its orbit map.

  \begin{theorem}\label{ST}
  Let $\Pi$ be an admissible $L$-Banach space representation of
$G$. Then $\Pi_{\rm an}$ is a dense subspace of $\Pi$. Moreover,
it is an admissible locally $\qp$-analytic representation of $G$ and for any open compact subgroup $H$ of $G$,
$\Pi_{\rm an}'$ is isomorphic, as $\mathcal{D}(H,L)$ coadmissible module to
$\mathcal{D}(H,L)\otimes_{L\otimes_{O_L} O_L[[H]]} \Pi'$.
  \end{theorem}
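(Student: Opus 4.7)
The plan is to work on the dual side and exploit the Fréchet-Stein structure of $\mathcal{D}(H,L)$: I would build an explicit coadmissible module $M$ and then identify it with $\Pi_{\rm an}'$, from which all three conclusions follow. Since the assertions depend only on the restriction of $\Pi$ to a compact open subgroup, I first reduce to the case $G=H$ with $H$ a compact uniform pro-$p$ group, so that the Mahler coordinates and the Fréchet-Stein presentation $\mathcal{D}(H,L)=\varprojlim_r\mathcal{D}_r(H,L)$ from Section~\ref{coadm} are available.

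Dualizing the dense inclusion $C^{\rm an}(H,L)\hookrightarrow C(H,L)$ yields an injective $L$-algebra map $\iota:D^c(H,L)\hookrightarrow\mathcal{D}(H,L)$, compatible with the Fréchet-Stein filtration. Set $M:=\mathcal{D}(H,L)\otimes_{D^c(H,L)}\Pi'$ and, for each $r\in(p^{-1},1)\cap p^{\mathbb{Q}}$, $M_r:=\mathcal{D}_r(H,L)\otimes_{D^c(H,L)}\Pi'$. By Theorem~\ref{dual1} and noetherianity of $D^c(H,L)$, the module $\Pi'$ admits a finite presentation, and the right-exact functor $\mathcal{D}_r(H,L)\otimes_{D^c(H,L)}-$ exhibits $M_r$ as a finitely presented $\mathcal{D}_r(H,L)$-module. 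Associativity of tensor product then gives the Fréchet-Stein compatibility $M_r\cong\mathcal{D}_r(H,L)\otimes_{\mathcal{D}_{r'}(H,L)}M_{r'}$ for $r<r'$, and commuting the inverse limit with the finite presentation yields $M=\varprojlim_r M_r$. Hence $M$ is coadmissible and carries a canonical nuclear Fréchet topology.

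The core obstacle is the topological identification $M\cong\Pi_{\rm an}'$. The orbit-map construction embeds $\Pi_{\rm an}$ continuously and $H$-equivariantly, with closed image, into $C^{\rm an}(H,L)\widehat{\otimes}_L\Pi$; dualizing and quotienting by the relations that enforce compatibility with $\iota$ produces a natural continuous map $M\to\Pi_{\rm an}'$. Bijectivity must be checked essentially by hand: surjectivity expresses the fact that each locally analytic orbit map expands as a convergent Mahler-type series $\sum_\alpha c_\alpha b^\alpha\cdot v$, while injectivity amounts to showing that any identification forced in $M$ by the base change already holds in $\Pi_{\rm an}'$, which can be read off from the explicit Mahler expansion once a finite presentation of $\Pi'$ is fixed. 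I expect this step to be the main technical hurdle, since it fuses the formal coadmissibility machinery with the concrete analytic description of orbit maps.

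With $\Pi_{\rm an}'\cong M$ established, admissibility of $\Pi_{\rm an}$ as a locally $\qp$-analytic representation is automatic from coadmissibility of $M$. Density of $\Pi_{\rm an}$ in $\Pi$ is equivalent, by Hahn-Banach, to injectivity of the unit map $\Pi'\to M$, $x\mapsto 1\otimes x$; the cleanest way to establish this is a direct mollification argument, convolving a given $v\in\Pi$ against a sequence of locally analytic functions on $H$ (obtained by truncating Mahler expansions) approximating the Dirac mass at the identity, which yields locally analytic vectors converging to $v$ in the Banach norm of $\Pi$.
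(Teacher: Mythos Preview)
The paper does not give a proof of this statement at all: Theorem~\ref{ST} is quoted as a fundamental result of Schneider and Teitelbaum and simply cited from \cite{ST3}, theorem~7.1. So there is no in-paper proof to compare your attempt against.

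That said, your outline is in the spirit of how the result is actually proved in \cite{ST3}: one works on the dual side, builds the coadmissible module $M=\mathcal{D}(H,L)\otimes_{D^c(H,L)}\Pi'$ using the Fr\'echet--Stein presentation, and then identifies it with $\Pi_{\rm an}'$. Your verification that $M$ is coadmissible (via finite presentations and associativity of the tensor product) is fine. The genuine work, as you correctly flag, is the identification $M\cong\Pi_{\rm an}'$; what you write there (``bijectivity must be checked essentially by hand'', with a gesture at Mahler expansions) is not yet a proof but a restatement of the problem. In \cite{ST3} this step passes through a careful analysis of the rigid-analytic ``radius-$r$'' pieces and the exactness/flatness properties of the transition maps, and is not a routine unwinding. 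For density, both of your suggested routes are viable: the mollification argument is elementary and works directly, while the injectivity of $\Pi'\to M$ is in fact a consequence of the (faithful) flatness of $\mathcal{D}(H,L)$ over $D^c(H,L)$, itself part of the Schneider--Teitelbaum package. In short: your plan is the right one, but the heart of the argument---the topological isomorphism $M\cong\Pi_{\rm an}'$---remains to be supplied, and for the purposes of the present paper one simply invokes \cite{ST3}.
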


\section{Proof of the main theorem}

\subsection{Ardakov-Wadsley's version of Quillen's lemma}

  The purpose of this section is to prove the following technical result, which will play
  a crucial role in the proof of theorem \ref{Schur}. The main ingredients are a generalization of
  Quillen's lemma, due to Ardakov and Wadsley \cite{AW} and the computation of the graded ring of
  $\mathcal{D}_r(H,L)$, due to Schneider and Teitelbaum \cite{ST3}. If $L$ is a finite extension of
  $\qp$, we will denote by $O_L$ its ring of integers, by $k_L$ its residue field and by
  $\pi_L$ a uniformizer of $L$.

\begin{theorem}\label{corAW}
Let $H$ be a uniform pro-$p$ group and let $r \in (p^{-1},1) \cap p^{\mathbb{Q}}$. Let $f$ be an endomorphism
of a finitely generated $\mathcal{D}_r(H,L)$-module $M$ such that each nonzero $g\in L[f]$ is invertible on $M$.
Then $f$ is algebraic over $L$.
\end{theorem}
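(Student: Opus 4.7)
The plan is to prove this by contradiction, reducing to the Ardakov-Wadsley analogue of Quillen's lemma \cite{AW}. Assume $f$ is not algebraic over $L$; then $L[f] \cong L[X]$ as $L$-algebras, and the hypothesis that nonzero elements of $L[f]$ act invertibly on $M$ lets me extend the action to an embedding of the fraction field $L(f) \hookrightarrow \text{End}_{\mathcal{D}_r(H,L)}(M)$ by localizing at the multiplicative set $L[f] \setminus \{0\}$. In particular, $M$ becomes a module over $\mathcal{D}_r(H,L) \otimes_L L(f)$, still finitely generated over $\mathcal{D}_r(H,L)$, with a transcendental commutative subfield of scalars commuting with the $\mathcal{D}_r(H,L)$-action.

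To derive a contradiction, I would exploit the fact that $\mathcal{D}_r(H,L)$ is almost commutative in the sense of Ardakov-Wadsley. By the description of distribution algebras in Section 4 of \cite{ST3}, the Banach norm $\|\cdot\|_r$ defines an exhaustive multiplicative filtration on $\mathcal{D}_r(H,L)$ whose associated graded ring is commutative and, up to inverting the class of a uniformizer, isomorphic to a polynomial ring in $d = \dim H$ variables over $k_L$. This places $\mathcal{D}_r(H,L)$ in the class of algebras to which the generalization of Quillen's lemma proved in \cite{AW} applies: for a finitely generated module $N$ over such an algebra, any endomorphism $\varphi$ whose $L$-polynomial subalgebra $L[\varphi]$ acts faithfully and invertibly on $N$ must be algebraic over $L$. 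Applied with $N = M$ and $\varphi = f$, this contradicts the injection $L(f) \hookrightarrow \text{End}(M)$ constructed above.

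The main obstacle, and what makes the proof non-trivial beyond the classical Quillen lemma for enveloping algebras, is matching the setting of \cite{AW}, which is formulated for affinoid enveloping algebras of $p$-adic Lie lattices, to distribution algebras. This translation is precisely what the graded-ring computation of \cite{ST3} supplies: one needs to verify that the commutativity and finite generation of $\gr \mathcal{D}_r(H,L)$ are strong enough to invoke loc.\ cit., and one may need an auxiliary step in which one passes to a cyclic submodule or a simple subquotient of $M$ on which the injectivity of $L[f] \hookrightarrow \text{End}(-)$ is preserved, so that the precise hypotheses of the Ardakov-Wadsley statement are met. Once this dictionary is in place, the conclusion follows directly.
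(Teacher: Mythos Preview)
Your overall strategy—reduce to the Ardakov--Wadsley generalization of Quillen's lemma via the graded-ring description of $\mathcal{D}_r(H,L)$ from \cite{ST3}—is exactly the paper's. But you gloss over the one genuine technical step.

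You assert that $\gr\,\mathcal{D}_r(H,L)$ is, up to inverting a uniformizer, a polynomial ring over $k_L$, and that this places $\mathcal{D}_r(H,L)$ directly in the setting of \cite{AW}. The hypothesis actually needed (cf.\ the remark following the theorem) is that $F^0 A/\pi_L F^0 A$ be a polynomial algebra over $k_L$, where $F^0 A$ is the unit ball for $\|\cdot\|_r$. This requires $\|\cdot\|_r$ to take values in $|L^*|=p^{\frac{1}{e}\mathbb{Z}}$, so that $\bigcup_{s>0}F^sA=\pi_L F^0A$; for an arbitrary $r\in(p^{-1},1)\cap p^{\mathbb{Q}}$ this fails over $L$, and $F^0A/\pi_L F^0A$ can acquire nilpotents. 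The paper's fix is to pass first to a finite Galois extension $L'/L$ whose absolute ramification index $e'$ satisfies $r\in p^{\frac{1}{e'}\mathbb{Z}}$; over $A=\mathcal{D}_r(H,L')$ one then gets $\gr^0(A)\simeq k_{L'}[u_1,\dots,u_d]$, and Corollary~8.6 of \cite{AW} applies to $M\otimes_L L'$. A short Galois descent closes the argument: if $f$ were transcendental over $L$, then for any nonzero $Q\in L'[X]$ the product $\prod_{\sigma\in\Gal(L'/L)}\sigma(Q)$ lies in $L[X]\setminus\{0\}$, so $Q(f\otimes 1)$ divides an invertible endomorphism of $M\otimes_L L'$ and is itself invertible—contradicting the result over $L'$. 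You correctly flag ``matching the setting of \cite{AW}'' as the obstacle, but this scalar extension and descent is its concrete content, and your proposal does not supply it.

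Your worry about passing to a cyclic or simple subquotient is, by contrast, unnecessary: the cited result in \cite{AW} applies to any finitely generated module, and the paper invokes it directly on $M\otimes_L L'$.
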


\begin{proof}

  \textbf{Step 1} Choose a finite Galois extension $L'/L$ with absolute ramification index $e'$ such that
  $r=p^{-\frac{a}{e'}}$ (resp. $r^2=p^{-\frac{a}{e'}}$ if $p=2$)
  for some integer $a$. Let $A=\mathcal{D}_r(H,L')$, which is a noetherian $L'$-Banach algebra
  for the norm $||-||_r$ (which takes values in $p^{\frac{1}{e'}\mathbb{Z}}$, by our choice of
  $L'$). Consider the filtration $F^{\cdot} A$ on $A$, induced by the norm (so
  $F^s A$ is the space of those $\lambda\in A$ such that $||\lambda||_r\leq p^{-s}$). As $||-||_r$ takes values in $p^{\frac{1}{e'}\mathbb{Z}}$, we have
  $\cup_{s>0} F^s A=\pi_{L'} F^0 A$. Thus, if $\rm{gr}^{\cdot}$ denotes the graded
  ring of $A$ with respect to the filtration $F^{\cdot}A$, then $\rm{gr}^0(A)$ is naturally isomorphic to
  $F^0 A/ \pi_{L'} F^0 A$. The proof of lemma 4.8 of \cite{ST3} (which heavily depends theorem 4.5 of loc.cit)
  shows that we have an isomorphism of rings $$\rm{gr}^0(A)\simeq k_{L'}[u_1,u_2,...,u_d],$$ where
  $d=\dim H$. Therefore $F^0 A/ \pi_{L'} F^0 A$ is naturally a polynomial algebra over
  $k_{L'}$.

  \textbf{Step 2} Using corollary 8.6 of \cite{AW}, we obtain the following result: if $N$ is a finitely generated $A$-module and if $f\in \rm{End}_A(N)$ is an endomorphism
  such that all nonzero $g\in L'[f]$ are invertible on $N$, then $f$ is algebraic over $L'$. Indeed, by step 1 the algebra
  $A$ satisfies the hypotheses of loc.cit.

  \textbf{Step 3} Finally, let $f$ be as in the statement of the theorem and suppose that $f$ is not algebraic over
  $L$. Then for each non zero polynomial $P \in L[X]$, the endomorphism $P(f)$ of $\emorph_{\mathcal{D}_r(H,L)}(M)$ is invertible. If $Q \in L'[X]$ is non zero, then $P=\prod_{\sigma \in \mathrm{Gal}(L' /L)} \sigma(Q)$ is a non zero polynomial in $L[X]$, hence $P(f)$ is invertible on $M$, and $P(f \otimes \mathrm{Id})$ is invertible on $M \otimes_L L'$. As $P(f \otimes \mathrm{Id})= \prod_{\sigma \in \mathrm{Gal}(L'/L)} \sigma(Q)(f \otimes \mathrm{Id})$ we conclude that $Q(f \otimes \mathrm{Id})$ is invertible on $M \otimes_L L'$. By step 2, $f \otimes \mathrm{Id}$ is algebraic over $L'$, contradicting the fact that
   for each non zero polynomial $Q \in L'[X]$, $Q(f \otimes \mathrm{Id})$ is invertible, in particular non zero. The result follows.
\end{proof}

\begin{remark}
1) The proof of corollary 8.6 of \cite{AW} is fairly difficult, but the result they prove is much stronger
and applies in much more general situations. However, it would be very nice to have a more elementary proof
of the corresponding statement in our situation, namely: let $A$ be a Banach $L$-algebra with unit ball
$F^0 A$. Suppose that $F^0 A/\pi_L F^0A$ is a polynomial algebra over $k_L$. If $M$ is a finitely generated
$A$-module and if $f\in \rm{End}_A(M)$ is such that any nonzero $g\in L[f]$ is invertible on $M$, then
$f$ is algebraic over $L$.

2) Completed group algebras do not satisfy the hypotheses of corollary 8.6 of \cite{AW}, so when proving
Schur's lemma for admissible Banach space representations we will have to use Schneider's and Teitelbaum's results
on locally analytic vectors.
\end{remark}

\subsection{ Algebraicity of $\rm{End}_{L[G]}(\Pi)$}

  The following proposition is one of the key points of the paper. It is essentially
a combination of all results recalled in the previous chapter.

\begin{proposition} \label{alg}
  Let $\Pi$ be an irreducible admissible $L$-Banach space (resp. locally $\qp$-analytic) representation
  of a $\qp$-analytic group $G$. Then any $f\in \rm{End}_{L[G]} (\Pi)$ is algebraic
  over $L$.
   \end{proposition}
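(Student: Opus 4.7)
The plan is to argue by contradiction: assuming $f$ is not algebraic over $L$, I aim to produce a nonzero polynomial $m\in L[X]$ with $m(f)=0$ on $\Pi$. The driving observation is that, by topological irreducibility of $\Pi$ together with the strictness of continuous $G$-equivariant maps between admissible representations (the discussion after Theorem~\ref{dual1}, plus its analogue from the abelianness of the category of coadmissible modules), every nonzero $g\in L[f]$ would have to be an isomorphism of $\Pi$ with continuous $G$-equivariant inverse. The goal is to leverage Theorem~\ref{corAW} to force algebraicity of $f$, once a suitable finitely generated Banach-algebra module carrying the action has been located.

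For the locally $\qp$-analytic case, I would pick a uniform pro-$p$ open subgroup $H\subset G$ (available by Lazard), making $\Pi'$ a coadmissible $\mathcal{D}(H,L)$-module with $\Pi'=\varprojlim_r M_r$ and each $M_r=\mathcal{D}_r(H,L)\otimes_{\mathcal{D}(H,L)}\Pi'$ finitely generated over $\mathcal{D}_r(H,L)$. Since $\Pi\neq 0$, some $M_r$ is nonzero; fix such an $r$, and let $f_r\in \emorph_{\mathcal{D}_r(H,L)}(M_r)$ be the induced endomorphism. For every nonzero $g\in L[f]$, its inverse $g^{-1}$ on $\Pi$ dualizes and base-changes to an inverse of $g_r$, so every nonzero element of $L[f_r]$ is invertible on $M_r$. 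Theorem~\ref{corAW} then produces a nonzero minimal polynomial $m\in L[X]$ of $f_r$; but $m(f)_r=m(f_r)=0$ on $M_r\neq 0$ forces $m(f)$ to be a non-invertible endomorphism of $\Pi$, while the non-algebraicity of $f$ makes $m(f)\neq 0$, contradicting the driving observation.

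For the Banach case, Theorem~\ref{corAW} is not available for $L\otimes_{O_L}O_L[[H]]$ (cf.\ the remark following it), so I would reroute through locally analytic vectors via Theorem~\ref{ST}. The dense subspace $\Pi_{\rm an}\subset\Pi$ is admissible locally analytic and stable under every continuous $G$-equivariant endomorphism, so $f$ restricts to $f_{\rm an}\in\emorph_{L[G]}(\Pi_{\rm an})$. Crucially, the argument above applies verbatim to $f_{\rm an}$ even though $\Pi_{\rm an}$ may not be irreducible: for any $p\in L[X]$ with $p(f)\neq 0$, the isomorphism $p(f)$ of $\Pi$ has continuous inverse preserving $\Pi_{\rm an}$, so $p(f_{\rm an})$ is invertible on $\Pi_{\rm an}$. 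Using the description $\Pi_{\rm an}'=\mathcal{D}(H,L)\otimes_{L\otimes_{O_L}O_L[[H]]}\Pi'$ from Theorem~\ref{ST}, the coadmissible argument yields a nonzero $m\in L[X]$ with $m(f_{\rm an})=0$ on $\Pi_{\rm an}$; density of $\Pi_{\rm an}$ in $\Pi$ together with continuity of $m(f)$ then force $m(f)=0$ on $\Pi$. The central obstacle is precisely the applicability of Theorem~\ref{corAW}: in the Banach case one must make the detour through $\Pi_{\rm an}$ because the completed group algebra lacks the polynomial graded structure exploited there, and in both cases the bookkeeping that nonzero elements of $L[f]$ remain nonzero and invertible after dualizing and base-changing to $M_r$ must be checked functorially.
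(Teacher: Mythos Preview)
Your proposal is correct and follows essentially the same route as the paper: pass to a nonzero finitely generated $\mathcal{D}_r(H,L)$-module (in the Banach case via locally analytic vectors and Theorem~\ref{ST}), use irreducibility of $\Pi$ to ensure nonzero polynomials in $f$ remain invertible there, and invoke Theorem~\ref{corAW} to force algebraicity. The only cosmetic difference is that you first restrict $f$ to $\Pi_{\rm an}$ and then dualize, whereas the paper dualizes $\Pi$ directly and base-changes via $(\Pi_{\rm an}')_r\simeq \mathcal{D}_r(H,L)\otimes_{D^c(H,L)}\Pi'$; note also that your final clause about density is redundant, since the contradiction is already reached once $m((f_{\rm an})_r)=0$ collides with the invertibility of $m(f_{\rm an})$.
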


 \begin{proof}
 Choose an open uniform pro-$p$-subgroup $H$ of $G$ and fix $r\in (p^{-1},1)\cap p^{\mathbb{Q}}$ such that
  $(\Pi_{\rm an}')_r \neq 0$ (resp. $\Pi'_r\ne 0$ in the locally $\qp$-analytic case). Such an $r$ exists by theorem \ref{ST} in the Banach case and for obvious reasons in the case of locally $\qp$-analytic representations.

   Suppose first that $\Pi$ is a Banach space representation.
  By theorem \ref{dual1}, any $f\in \rm{End}_{L[G]} (\Pi)$ induces an endomorphism $f'$ of the finitely generated $D^c(H,L)$-module $\Pi'$. The natural isomorphism
 $$ (\Pi_{\rm an}')_r\simeq \mathcal{D}_r(H,L)\otimes_{D(H,L)} \Pi_{\rm an}'\simeq\mathcal{D}_r(H,L)\otimes_{D^c(H,L)}
\Pi',$$ obtained using theorem \ref{ST}, allows us to extend $f'$ by $\mathcal{D}_r(H,L)$-linearity to an element $f'_r$ of  $\rm{End}_{ \mathcal{D}_r(H,L)}
 (\Pi_{\rm an}')_r$. This yields an anti-homomorphism
$\rm{End}_{L[G]}(\Pi)\to\rm{End}_{\mathcal{D}_r(H,L)}  (\Pi_{\rm an}')_r$.
For locally analytic representations, one still has such an anti-homomorphism,
essentially by definition of coadmissible modules.

Let $f \in \rm{End}_{L[G]}(\Pi) \backslash \{ 0 \}$ and $A=L[f]\subset \rm{End}_{L[G]}(\Pi)$. As $\Pi$ is irreducible, the discussion following theorem \ref{dual1} (resp.
analogous properties of the category of admissible locally $\qp$-analytic representations)
shows that any nonzero $g\in A$ is invertible on $\Pi$.
As $g\to g'_r$ is anti-multiplicative, it follows that $g'_r$ is an automorphism of the finitely generated
$\mathcal{D}(H,L)_r$-module $(\Pi_{\rm an}')_r$ (resp. $\Pi'_r$). Theorem \ref{corAW} implies that $f'_r$ is algebraic over
 $L$, so there exists a nonzero polynomial $P \in L[X]$ such that $P(f'_r)=0$. It is easy to see that this forces
 $P(f)=0$ (otherwise $P(f)$ would be invertible and so $P(f'_r)$ would also be invertible).
The result follows.

\end{proof}

  The following proposition is heavily inspired by lemmas 4.1 and 4.2 of \cite{Pa}.
It is however more technically involved in the locally analytic case. Its proof would be
much easier (and identical to that of loc.cit) if we knew that an admissible locally analytic
representation contains an irreducible closed subrepresentation. For admissible Banach space representations,
this is an immediate consequence of the fact that any decreasing sequence of closed subrepresentations
is eventually constant (which itself follows easily by duality and the fact that $\mathcal{D}^c(H,L)$ is noetherian),
but this probably does not happen in the locally analytic case.

\begin{proposition}
 Let $\Pi$ be an irreducible admissible Banach space (resp. locally analytic) $L$-representation of
 $G$. Then $\Pi$ is absolutely irreducible if and only if $\rm{End}_{L[G]}(\Pi)=L$.
\end{proposition}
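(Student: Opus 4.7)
The forward implication $(\Rightarrow)$ will be an immediate consequence of Proposition~\ref{alg}. Given $f\in \rm{End}_{L[G]}(\Pi)$, Proposition~\ref{alg} tells me $f$ is algebraic over $L$; since any nonzero equivariant endomorphism of the irreducible $\Pi$ is an automorphism (by the discussion following Theorem~\ref{dual1}, or its locally analytic analogue), the subring $L[f]$ is a finite field extension of $L$. If $[L[f]:L]>1$, I choose a finite Galois extension $L'/L$ containing $L[f]$; then $L[f]\otimes_L L'\simeq\prod_{\sigma}L'$, indexed by $L$-embeddings $\sigma: L[f]\hookrightarrow L'$, and the resulting system of orthogonal idempotents acts $L'[G]$-equivariantly on $\Pi\otimes_L L'$, producing a nontrivial direct sum decomposition and contradicting absolute irreducibility. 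Hence $\rm{End}_{L[G]}(\Pi)=L$.

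For the converse $(\Leftarrow)$ in the Banach case, I will follow Pa\v{s}k\={u}nas closely. Assume $\rm{End}_{L[G]}(\Pi)=L$ and, for contradiction, that $\Pi\otimes_L L'$ is reducible for some finite Galois extension $L'/L$. Since $(\Pi\otimes_L L')'$ is finitely generated over the noetherian algebra $L'\otimes_{O_L}O_L[[H]]$, closed subrepresentations of $\Pi\otimes_L L'$ satisfy the descending chain condition, so I may choose an irreducible closed subrepresentation $W$. By Galois descent, $\sum_{\sigma\in\Gal(L'/L)}\sigma(W)$ comes from a nonzero closed $L$-subrepresentation of $\Pi$, hence equals $\Pi\otimes_L L'$. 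Picking a minimal subfamily $\sigma_1(W),\ldots,\sigma_s(W)$ still summing to $\Pi\otimes_L L'$, minimality combined with the Schur-type fact that any nonzero $G$-map between irreducible admissible Banach representations is an isomorphism forces $\sigma_i(W)\cap\sum_{j\ne i}\sigma_j(W)=0$, so the sum is direct with $s\ge 2$. The associated projections are then nontrivial orthogonal idempotents in $\rm{End}_{L'[G]}(\Pi\otimes_L L')$. But a base-change argument on the duals (using noetherianity and flatness) yields $\rm{End}_{L'[G]}(\Pi\otimes_L L')\simeq\rm{End}_{L[G]}(\Pi)\otimes_L L'=L'$, which has no nontrivial idempotents --- contradiction.

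The main obstacle is the locally $\qp$-analytic case: closed subrepresentations of $\Pi\otimes_L L'$ need not satisfy DCC, so no irreducible closed subrep $W$ is available to launch the argument. My plan is to still establish $\rm{End}_{L'[G]}(\Pi\otimes_L L')\simeq\rm{End}_{L[G]}(\Pi)\otimes_L L'=L'$ by working with the coadmissible $\mathcal{D}(H,L')$-module $\Pi'\otimes_L L'\simeq(\Pi\otimes_L L')'$, exploiting that base change and formation of endomorphisms both commute with passage to each noetherian truncation $\mathcal{D}_r(H,L')$ and with the subsequent inverse limit. Given a proper nonzero closed $N\subsetneq\Pi\otimes_L L'$, Galois descent still forces $\bigcap_\sigma\sigma(N)=0$ and $\sum_\sigma\sigma(N)=\Pi\otimes_L L'$, and the hard step is to manufacture, from this data alone, a direct sum decomposition of $\Pi\otimes_L L'$ into Galois conjugates of some closed subrep without the ``irreducibility'' crutch. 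A natural route is to drop to a truncation $\mathcal{D}_r$ where noetherianity restores DCC, run the Pa\v{s}k\={u}nas-style direct sum argument on the resulting finitely generated $\mathcal{D}_r(H,L')$-module, and then lift a suitable idempotent back to the inverse limit; controlling this lift is precisely what the authors flag as ``trickier''.
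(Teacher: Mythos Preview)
Your forward implication and the Banach converse are correct and close to the paper's line (the paper phrases $(\Rightarrow)$ via an eigenvalue $c\in L'$ of $f\otimes 1$ rather than via idempotents of $L[f]\otimes_L L'$, but this is cosmetic; your base-change identity $\rm{End}_{L'[G]}(\Pi_{L'})\simeq \rm{End}_{L[G]}(\Pi)\otimes_L L'$ is also what the paper uses implicitly).

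The gap is the locally analytic converse, and the route you sketch is not the one the paper takes, and it has a concrete obstruction. Dropping to the truncation $(\Pi_{L'})'_r$ and running a Pa\v{s}k\={u}nas-style decomposition there does not work as stated, because the full group $G$ has no evident action on $(\Pi_{L'})'_r$: only the compact open $H$ acts, via $\mathcal{D}_r(H,L')$. So there is no $G$-stable (let alone Galois-twisted) direct-sum decomposition to produce at level $r$, and hence no $G$-equivariant idempotent to lift. Even setting $G$-equivariance aside, lifting idempotents along $\rm{End}_{\mathcal{D}(H,L')}(\Pi'_{L'})\to \rm{End}_{\mathcal{D}_r(H,L')}((\Pi_{L'})'_r)$ is not automatic.

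What the paper actually does is manufacture an \emph{irreducible} closed $L'[G]$-subrepresentation of $\Pi_{L'}$ directly, via Zorn's lemma on the dual side, using the truncation only to control chains. The two ingredients are: (i) for a fixed $r$ with $\Pi'_r\ne 0$, every nonzero closed $L'[G]$-subrepresentation $V\subset\Pi_{L'}$ has $V'_r\ne 0$ --- this is precisely your Galois-descent observation $\sum_{\gamma} r_\gamma(V)=\Pi_{L'}$, dualized and then base-changed to $\mathcal{D}_r$ by flatness; and (ii) hence, given a chain $(M_i)$ of proper $G$-stable coadmissible submodules of $\Pi'_{L'}$, the submodules $(M_i)_r$ stay proper in the finitely generated $\mathcal{D}_r(H,L')$-module $(\Pi_{L'})'_r$; since submodules of a finitely generated module over a noetherian Banach algebra are closed, $\bigcup_i (M_i)_r$ is closed and proper, so the closure $N$ of $\bigcup_i M_i$ has $N_r\subset \bigcup_i (M_i)_r\subsetneq(\Pi_{L'})'_r$, forcing $N\subsetneq\Pi'_{L'}$. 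Zorn then yields a maximal proper $G$-stable coadmissible submodule, whose annihilator in $\Pi_{L'}$ is the sought irreducible closed subrepresentation $V$. From there your Banach argument runs verbatim: $\Pi_{L'}$ is a quotient of $\bigoplus_\gamma r_\gamma(V)$ with each summand irreducible, hence semisimple, and $\rm{End}_{L'[G]}(\Pi_{L'})=L'$ forces irreducibility.

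In short: do not try to lift idempotents from a truncation; use a single truncation level only as the noetherian bound that makes Zorn go through \emph{at the coadmissible level}.
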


\begin{proof}
 Suppose that $\Pi$ is absolutely irreducible and let $f\in \rm{End}_{L[G]}(\Pi)$.
 Let $P\in L[X]$ be a polynomial killing $f$ (it exists by the previous proposition)
 and let $L'$ be a splitting field of $P$. There exists $c\in L'$ such that
 $f\otimes 1-c$ is not invertible on $\Pi\otimes L'$. As $\Pi\otimes L'$ is irreducible,
 it follows that $f\otimes 1=c$. Using the action of $\rm{Gal}(L'/L)$, it is easy to see
 that $c\in L$ and so $f=c$ (see \cite{Pa}, lemma 4.1 for details).

  The hard point is the converse implication, which is an easy consequence of the following

 \begin{lemma}
  Let $L'/L$ be a finite Galois extension and let $\Pi$ be an irreducible admissible Banach space
  (resp. locally analytic) $L$-representation of $G$. Then $\Pi_{L'}=\Pi\otimes_{L} L'$ is a
 finite direct sum of irreducible admissible $L'$-Banach space (resp. locally analytic) representations of $G$.
 \end{lemma}

   This lemma yields the desired result, since if $\rm{End}_{L[G]}(\Pi)=L$, then $\rm{End}_{L'[G]}(\Pi_{L'})=L'$ and
   so $\Pi_{L'}$ has to be irreducible. We will prove the lemma in the locally analytic case, which is
   more difficult (see lemma 4.2 of \cite{Pa} for the Banach case). Fix $r\in (p^{-1},1)\cap p^{\mathbb{Q}}$ such that $\Pi_r'\ne 0$.

  \begin{lemma}\label{nonnul}
   For any nonzero $L'[G]$ closed subrepresentation $V$ of $\Pi_{L'}$ we have
   $V_r'\ne 0$.
  \end{lemma}

  \begin{proof}
  We follow some of the arguments
   in \cite{Pa}, lemma 4.2. Let $\Gamma=\rm{Gal}(L'/L)$ and let $r_{\gamma}$ be the continuous automorphism of
   $\Pi_{L'}$ defined by $r_{\gamma}(v\otimes c)=v\otimes \gamma(c)$. We claim that the natural map
   $\oplus_{\gamma\in \Gamma} r_{\gamma}(V)\to \Pi_{L'}$ is surjective. Its image
   $X$ is closed in $\Pi_{L'}$ (as the image of a map between admissible representations),
   stable by $G$ and so $X^{\Gamma}\subset \Pi$ is a closed $G$-stable subspace of $\Pi$.
   But $X^{\Gamma}\ne 0$, as for any $v\in X-\{0\}$ there exists $c\in L'$ such that
   $\sum_{\gamma\in \Gamma} r_{\gamma}(cv)\ne 0$ (by linear independence of characters).
   Thus $X^{\Gamma}=\Pi$ and so $X=\Pi_{L'}$. Now, by duality
   $\Pi_{L'}'$ is a submodule of $\oplus_{\gamma\in\Gamma} r_{\gamma}(V')$ and by flatness
   of $\mathcal{D}_r(H,L)$ over $\mathcal{D}(H,L)$, $\Pi_{r}'\otimes L'$ is a submodule of
   $\oplus_{\gamma\in\Gamma} r_{\gamma}(V'_r) \simeq \oplus_{\gamma\in \Gamma} (V'_r \otimes_{L', \gamma} L')$. In particular, $V_r'\ne 0$.
  \end{proof}

\begin{lemma}
  $\Pi_{L'}'$ contains a maximal strict coadmissible $\mathcal{D}(H,L')$-submodule stable by $G$.
\end{lemma}

\begin{proof}
 In this proof, coadmissible submodule will mean a coadmissible $\mathcal{D}(H,L')$-submodule of
 $\Pi'_{L'}$, stable by $G$. By dualizing the result of the previous lemma, it follows that
 for any coadmissible strict submodule $M$ of $\Pi_{L'}'$ we have $M_r\ne (\Pi_{L'})_r'$.
Suppose now that $(M_i)_{i\in I}$ is a chain of strict coadmissible submodules of $\Pi_{L'}'$ and let
$N$ be the closure in $\Pi_{L'}'$ of $\cup_{i} M_i$. Then $N$ is a coadmissible submodule of
$\Pi_{L'}'$ and we claim that $N\ne \Pi_{L'}'$. If we manage to prove this, we are done by Zorn's lemma.
As $(\Pi_{L'})'_r$ is finitely generated over $\mathcal{D}(H,L')_r$, the union of $(M_i)_r$ is a strict
submodule of $(\Pi_{L'})'_r$. On the other hand, $\cup_{i} (M_i)_r$ is a sub $\mathcal{D}_r(H,L)$-module of
$(\Pi_{L'})'_r$, thus it is closed (we recall that all submodules of a finitely generated module over a noetherian
Banach algebra are closed). On the other hand, by definition $\cup_{i} (M_i)_r$ is dense in
$N_r$. Consequently $N_r\subset \cup_{i} (M_i)_r$ and so $N_r\ne (\Pi_{L'})'_r$. Therefore
$N\ne \Pi_{L'}$, which finishes the proof of the lemma.

\end{proof}

  Dualizing the previous lemma, we deduce that $\Pi_{L'}$ contains an irreducible closed
  $L'[G]$-subrepresentation $V$. The argument in lemma \ref{nonnul} shows that
  $\Pi_{L'}$ is a quotient of $\oplus_{\gamma\in\Gamma} r_{\gamma}(V)$. Each of the
  representations $r_{\gamma}(V)$ is admissible and irreducible. We easily deduce that
  $\Pi_{L'}$ is a finite direct sum of irreducible admissible representations.

\end{proof}

\begin{corollary}
 1) Any absolutely irreducible admissible Banach space representation of a $\qp$-analytic group has a central character.

 2) Any absolutely irreducible admissible locally analytic representation of a $\qp$-analytic group has an infinitesimal
 character.
\end{corollary}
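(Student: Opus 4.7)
The plan is to leverage the preceding proposition --- which gives $\mathrm{End}_{L[G]}(\Pi)=L$ whenever $\Pi$ is absolutely irreducible --- by exhibiting natural maps from $Z(G)$ (for part (1)) and from the center $Z(U(\mathfrak{g}_L))$ of the enveloping algebra (for part (2), with $\mathfrak{g}=\mathrm{Lie}(G)$) into this endomorphism algebra. Part (1) is immediate: for $z\in Z(G)$, the operator $\pi(z):\Pi\to\Pi$ commutes with every $\pi(g)$ by centrality of $z$, so $\pi(z)\in\mathrm{End}_{L[G]}(\Pi)=L$, and writing $\pi(z)=\chi(z)\cdot\mathrm{id}$ produces a continuous character $\chi:Z(G)\to L^{\times}$, the required central character.

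For part (2), every vector of the locally $\qp$-analytic representation $\Pi$ is locally analytic, so differentiation yields a natural $L$-algebra homomorphism $a:U(\mathfrak{g}_L)\to \mathrm{End}_L(\Pi)$ satisfying the standard compatibility $\pi(g)\circ a(u)\circ \pi(g)^{-1}=a(\mathrm{Ad}(g)(u))$. Consequently, for $z\in Z(U(\mathfrak{g}_L))$, the operator $a(z)$ will belong to $\mathrm{End}_{L[G]}(\Pi)$ as soon as $\mathrm{Ad}(g)(z)=z$ for every $g\in G$, in which case the proposition forces $a(z)\in L$ and the resulting $\chi(z):=a(z)$ is the desired infinitesimal character $\chi:Z(U(\mathfrak{g}_L))\to L$.

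The main obstacle is establishing this full $\mathrm{Ad}(G)$-invariance of $Z(U(\mathfrak{g}_L))$. For $z\in Z(U(\mathfrak{g}_L))$ and $g=\exp(X)$ with $X$ in the Lie algebra of a uniform pro-$p$ open subgroup $H$, the identity $\mathrm{Ad}(\exp X)=\exp(\mathrm{ad}\, X)$ together with $[X,z]=0$ gives $\mathrm{Ad}(g)(z)=z$, so $a(z)$ already commutes with $H$. This extends to all of $G$ whenever $G$ is topologically generated by $H$, in particular when $G=\mathbf{G}(\qp)$ for a connected algebraic group $\mathbf{G}$: in that case $\mathrm{Ad}(G)$ lies in the inner automorphism group of $\mathfrak{g}_L$, which fixes the center of its enveloping algebra pointwise. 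In complete generality, one may instead restrict $\chi$ to the $G$-invariant subalgebra $Z(U(\mathfrak{g}_L))^G$, which remains the natural notion of infinitesimal character.
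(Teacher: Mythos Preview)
Your argument is correct and is precisely the one the paper leaves implicit (the corollary is stated without proof). Part (1) is immediate from $\mathrm{End}_{L[G]}(\Pi)=L$, and for part (2) you correctly pass through the derived action of $U(\mathfrak{g}_L)$ together with the conjugation identity $\pi(g)\,a(u)\,\pi(g)^{-1}=a(\mathrm{Ad}(g)u)$.

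One small wording issue: the clause ``this extends to all of $G$ whenever $G$ is topologically generated by $H$'' is vacuous as stated, since an open subgroup is closed and hence topologically generates only itself. What you presumably want is the algebraic-group argument you give next: for $G=\mathbf{G}(\qp)$ with $\mathbf{G}$ connected, the morphism $\mathbf{G}\to\mathrm{GL}(Z(U(\mathfrak{g}_L))_{\leq n})$ induced by $\mathrm{Ad}$ has zero differential, hence is trivial in characteristic zero, so $\mathrm{Ad}(g)$ fixes $Z(U(\mathfrak{g}_L))$ pointwise. Your fallback to $Z(U(\mathfrak{g}_L))^G$ for a general $\qp$-analytic group is the right caveat; the paper does not spell this out, and indeed for groups whose component structure acts nontrivially on $Z(U(\mathfrak{g}_L))$ the unqualified statement requires this interpretation.
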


\subsection{Algebraic Fr\'echet division algebras and finiteness results}

 The aim of this section is to prove a non commutative version of a classical result, stating that
 a complete algebraic field extension of $\qp$ is finite over $\qp$. Combined with theorem \ref{alg},
 this will yield the finiteness property of $\rm{End}_{L[G]}(\Pi)$ when $\Pi$ is irreducible.
 Despite the rather innocent looking statement, we could not find a proof avoiding some
 pretty difficult results from
 non commutative algebra.

\begin{theorem}\label{Ban}
  Let $A$ be a Fr\'echet algebra over $L$. If $A$ is a division algebra and if any element of $A$ is algebraic
over $L$, then $A$ is finite dimensional over $L$.
\end{theorem}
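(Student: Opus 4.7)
The plan is to combine Baire's theorem with Kaplansky's work on PI division algebras, reducing ultimately to the classical fact that a complete algebraic extension of a local field is finite.

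For each $n \geq 1$, introduce $A_n = \{a \in A : 1, a, \ldots, a^n \text{ are } L\text{-linearly dependent}\}$. Normalising the dependence relation so that its coefficients lie in the compact set $S = \{(\lambda_i) \in O_L^{n+1} : \max_i|\lambda_i|=1\}$, one sees that $A_n$ is the projection to $A$ of the closed subset $\{(a,(\lambda_i)) : \sum_i \lambda_i a^i = 0\}$ of $A \times S$; since $S$ is compact, the projection is closed, so each $A_n$ is closed in $A$. Since every element of $A$ is algebraic over $L$, $A = \bigcup_n A_n$, and Baire's theorem applied to the Fr\'echet space $A$ yields some $A_N$ with nonempty interior.

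The delicate next step is to upgrade ``$A_N$ has nonempty interior'' into a \emph{uniform} degree bound $N'$ valid on all of $A$. The difficulty is that the $A_n$ are not additively closed, so the standard topological-group / vector-space trick (covering $A$ by translates of a neighbourhood) does not apply. One must instead use the stability of $A_n$ under $L^\times$-scaling, $L$-translation, and inner conjugation by invertible elements of $A$ (all of which preserve minimal polynomials over $L$), together with the division-algebra structure, to produce the bound. This is the main obstacle.

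Once such a uniform degree bound $N'$ over $L$ is available, the same bound holds \emph{a fortiori} over the center $Z(A)$, since $L \subseteq Z(A)$. Kaplansky's theorem on division algebras algebraic of bounded degree over their center then yields $[A : Z(A)] \leq (N')^2 < \infty$. Finally, $Z(A)$ is closed in $A$ and hence a Fr\'echet $L$-algebra; as a commutative subring of the division algebra $A$, it is a field, and every element is algebraic over $L$. The classical theorem that a complete algebraic extension of a local field is finite (proved by assuming otherwise, producing elements $\alpha_i$ of arbitrarily large degree, forming a rapidly convergent series $\sum_i p^{k_i}\alpha_i$, and invoking Krasner's lemma to show its limit would have infinite degree) forces $[Z(A) : L] < \infty$. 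Multiplying, $[A:L] = [A:Z(A)] \cdot [Z(A):L] < \infty$, as desired.
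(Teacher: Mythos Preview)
Your overall architecture matches the paper's (Baire, then Kaplansky, then finiteness of the center via Krasner and a second application of Baire), but you have left precisely the crucial step unproved. You write that upgrading ``$A_N$ has nonempty interior'' to a uniform degree bound on all of $A$ is ``the main obstacle'', list some invariances of $A_N$ (scaling by $L^\times$, translation by $L$, conjugation), and then simply assert that these ``produce the bound''. They do not, at least not in any way you have indicated: these operations move the open ball $B(x_0,r)\subset A_N$ only to neighbourhoods of points of the form $\lambda\, g x_0 g^{-1}+\mu$ with $\lambda\in L^\times$, $\mu\in L$, $g\in A^\times$, and there is no reason such points are dense in $A$. Since the $A_n$ are not closed under addition (sums of elements of bounded degree need not have bounded degree in a noncommutative algebra), the usual covering-by-translates argument is unavailable. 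As written, this step is a genuine gap.

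The paper closes exactly this gap, and the idea is worth knowing. Rather than trying to obtain a uniform degree bound, one manufactures a \emph{polynomial identity} directly. If $u\in F_n$ then $u^0,\dots,u^n$ are $L$-linearly dependent, hence for any $v$ the elements $u^0 v,\dots,u^n v$ are dependent, and the alternating expression
\[
p(u,v)\;=\;\sum_{\sigma\in S_{n+1}}\varepsilon(\sigma)\,u^{\sigma(0)}v\,u^{\sigma(1)}v\cdots u^{\sigma(n)}v
\]
vanishes for all $u,v\in F_n$. Now for arbitrary $a,b\in A$ and small $\lambda_1,\lambda_2\in L$ one has $x_0+\lambda_1 a,\;x_0+\lambda_2 b\in F_n$, so $p(x_0+\lambda_1 a,\,x_0+\lambda_2 b)=0$; since this is a polynomial relation in $\lambda_1,\lambda_2$ it holds for all $\lambda_i$, and letting $\lambda_i\to\infty$ (using the homogeneity of $p$) gives $p(a,b)=0$. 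Thus $A$ is a PI-algebra, and one invokes Kaplansky's theorem in its PI form rather than the bounded-degree form you cite. Your argument can be repaired by inserting exactly this step; the remainder (center algebraic over $L$, hence of at most countable $L$-dimension by Krasner, hence finite by Baire) is fine and agrees with the paper.
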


\begin{proof}
Let $F_n$ be the set of those $a\in A$ that are killed by some polynomial $f\in L[X]$ of degree at most
$n$ and whose Gauss norm is between $1/n$ and $n$. The restriction on the Gauss norm of the polynomials killing the elements of $F_n$, the local compactness of
$L$ and the completeness of $A$ imply that $F_n$ is closed in $A$. By hypothesis, $A$ is the union of the $F_n$'s, so by Baire's lemma there exists
$n$ such that $F_n$ contains an open ball $B(x,r)$ of $A$. Fix such $n$. If $u,v\in F_n$, then
$(u^i)_{0\leq i\leq n}$ is linearly dependent over $L$ and thus so is the family $(u^iv)_{0\leq i\leq n}$. Therefore
$$\sum_{\sigma\in S_{n+1}} \varepsilon(\sigma) (u^{\sigma(0)}v \cdot u^{\sigma(1)}v\cdot...\cdot u^{\sigma(n)}v)=0.$$
That is, there exists a nonzero homogeneous polynomial in non commutative variables $p(u,v)$ such that
$p(u,v)=0$ for all $u,v\in F_n$. Let $a,b\in A$ be arbitrary elements. Since $B(x,r)\subset F_n$, for all $\lambda_1$, $\lambda_2\in L$ of sufficiently
large valuation we have $x+\lambda_1 a, x+\lambda_2 b\in F_n$ and so $p(x+\lambda_1 a, x+\lambda_2 b)=0$. But this remains
true for any $\lambda_1, \lambda_2$, as it is a polynomial identity. We deduce (using homogeneity of $p$) that $p(a,b)=0$ for any $a,b\in A$.
Therefore, $A$ is a polynomial-identity algebra. By a deep theorem of Kaplansky (\cite{Ka}), any polynomial-identity division
algebra is finite dimensional over its centre $Z$. Fix an algebraic closure $\overline{\qp}$ of $\qp$ and an embedding $\qp\to \overline{\qp}$. Since $Z$ is an algebraic field extension of $\qp$, this embedding extends to an embedding
$Z\to \overline{\qp}$ of $\qp$-vector spaces. By Krasner's lemma, $\overline{\qp}$ has countable dimension over
$\qp$, so $Z$ has at most countable dimension over $\qp$. But any Fr\'echet space of at most countable dimension
over $\qp$ is finite dimensional, as it easily follows from Baire's lemma. We deduce that $Z$ is finite dimensional over
$\qp$, hence that $A$ is finite dimensional over $\qp$. The result follows.
\end{proof}

  We are now able to prove the following finiteness result:

\begin{theorem}
  If $\Pi$ is an irreducible admissible $L$-Banach space (resp. locally $\qp$-analytic) representation of a $\qp$-analytic group $G$, then $\rm{End}_{L[G]}(\Pi)$
is a finite dimensional $L$-division algebra.
\end{theorem}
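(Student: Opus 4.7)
The proof assembles three ingredients already available in the paper: Proposition \ref{alg}, which shows that every $f \in \rm{End}_{L[G]}(\Pi)$ is algebraic over $L$; the division algebra property of $\rm{End}_{L[G]}(\Pi)$; and Theorem \ref{Ban}, which upgrades a Fr\'echet division $L$-algebra all of whose elements are algebraic to a finite-dimensional one. The remaining task is to exhibit a natural Fr\'echet $L$-algebra structure on $\rm{End}_{L[G]}(\Pi)$. The division property is immediate in both cases: in the Banach setting it is spelled out after Theorem \ref{dual1}, and in the locally analytic setting the identical argument applies, using that morphisms between coadmissible modules (equivalently, via duality, between admissible locally analytic representations) are automatically continuous and strict with closed image.

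\textbf{Banach case.} Here the topology is immediate: $\rm{End}_L^{\mathrm{cts}}(\Pi)$ is a Banach $L$-algebra for the operator norm, and $\rm{End}_{L[G]}(\Pi)$ is cut out by the closed conditions $fg = gf$ for $g \in G$. It is therefore a closed $L$-subalgebra, and a fortiori a Fr\'echet $L$-algebra, so Theorem \ref{Ban} applies directly.

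\textbf{Locally analytic case.} This is the more delicate step. Fix an open uniform pro-$p$ subgroup $H$ of $G$, so that $\Pi' \simeq \varprojlim_r \Pi'_r$ with each $\Pi'_r$ a finitely generated $\mathcal{D}_r(H,L)$-module, hence canonically a Banach $L$-space. By the duality recalled in Section \ref{coadm}, any $f \in \rm{End}_{L[G]}(\Pi)$ dualizes to a $\mathcal{D}(H,L)$-linear, $G$-equivariant endomorphism of $\Pi'$, which in turn is the inverse limit of a compatible family of endomorphisms $f'_r$ of $\Pi'_r$ that are $\mathcal{D}_r(H,L)$-linear and commute with $G$; conversely, every such compatible family assembles into a unique $f$. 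This yields a natural $L$-linear homeomorphism (in fact an anti-isomorphism of $L$-algebras)
$$ \rm{End}_{L[G]}(\Pi) \;\simeq\; \varprojlim_r \, \{\phi \in \rm{End}_{\mathcal{D}_r(H,L)}(\Pi'_r) : \phi g = g\phi \text{ for all } g \in G\}. $$
Each factor on the right is a closed $L$-subalgebra of the Banach algebra $\rm{End}_L^{\mathrm{cts}}(\Pi'_r)$, so the right-hand side is a countable projective limit of Banach $L$-algebras and hence a Fr\'echet $L$-algebra. Transporting this structure, $\rm{End}_{L[G]}(\Pi)$ becomes a Fr\'echet $L$-algebra, and Theorem \ref{Ban} concludes. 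The heart of the argument is precisely this topological bookkeeping in the locally analytic setting, where the Fr\'echet structure is read off from the Fr\'echet--Stein presentation of $\mathcal{D}(H,L)$ through duality; everything else is a direct appeal to Proposition \ref{alg} and Theorem \ref{Ban}.
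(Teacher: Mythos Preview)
Your overall strategy is exactly the paper's: reduce to Proposition \ref{alg} and Theorem \ref{Ban} by exhibiting a Fr\'echet $L$-algebra structure on $\rm{End}_{L[G]}(\Pi)$, with the Banach case immediate and the locally analytic case handled via the Fr\'echet--Stein presentation of $\mathcal{D}(H,L)$ and duality.

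There is, however, a small but genuine slip in the locally analytic step. You write
\[
\rm{End}_{L[G]}(\Pi)\;\simeq\;\varprojlim_r\,\{\phi\in \rm{End}_{\mathcal{D}_r(H,L)}(\Pi'_r):\phi g=g\phi\ \text{for all }g\in G\},
\]
but in general $G$ does not act on $\Pi'_r$: the Banach space $\Pi'_r=\mathcal{D}_r(H,L)\otimes_{\mathcal{D}(H,L)}\Pi'$ is built from $H$, and an element $g\in G\setminus H$ need not normalize $H$ nor preserve the norm $\|\cdot\|_r$. So the condition ``$\phi g=g\phi$'' does not make sense at level $r$, and the displayed isomorphism is not well posed. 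The paper avoids this by first establishing the Fr\'echet structure on the larger algebra $\rm{End}_{L[H]}(\Pi)$ via
\[
\rm{End}_{L[H]}(\Pi)^{\rm op}\;\simeq\;\rm{End}_{\mathcal{D}(H,L)}(\Pi')\;\simeq\;\varprojlim_r \rm{End}_{\mathcal{D}_r(H,L)}(\Pi'_r),
\]
and then observing that $\rm{End}_{L[G]}(\Pi)$ sits inside $\rm{End}_{L[H]}(\Pi)$ as a closed subalgebra (the $G$-equivariance conditions, read on $\Pi'$ where $G$ genuinely acts by continuous automorphisms, are closed for this Fr\'echet topology). With this correction your argument becomes identical to the paper's.
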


\begin{proof}

  The discussion following theorem \ref{dual1} shows that
  $\rm{End}_{L[G]}(\Pi)$ is a division algebra. In view of theorems
  \ref{alg} and \ref{Ban}, it is enough to prove that $\rm{End}_{L[G]}(\Pi)$ is naturally
  a Fr\'echet algebra. For Banach space representations, this is clear, since
  in this case $\rm{End}_{L[G]}(\Pi)$ is a closed subalgebra of $\rm{End}_{L}(\Pi)$,
  which is naturally a Banach algebra for the usual sup norm. Let us assume that
  $\Pi$ is an admissible locally analytic representation of $G$.  Fix a compact open subgroup $H$ of $G$, so the topological dual $\Pi'$ is a coadmissible $\mathcal{D}(H,L)$-module.
As $\Pi'\simeq \varprojlim_r \Pi'_r$ and $\mathcal{D}(H,L)\simeq \varprojlim_r \mathcal{D}_r(H,L)$, one has a natural isomorphism
of $L$-algebras $$\emorph_{\mathcal{D}(H,L)}(\Pi')\simeq \varprojlim_r \emorph_{\mathcal{D}_r(H,L)}(\Pi'_r).$$
Since $\Pi\to \Pi'$ induces an antiequivalence between admissible locally analytic representations of
 $H$ and coadmissible $\mathcal{D}(H,L)$-modules, we have a canonical isomorphism $\emorph_{\mathcal{D}(H,L)}(\Pi')\simeq
\emorph_{L[H]}(\Pi)^{\rm op}$. Note that $\emorph_{L}(\Pi'_r)$ is naturally a Banach algebra
for the sup norm (as $\Pi'_r$ is a Banach space) and $\emorph_{\mathcal{D}_r(H,L)}(\Pi'_r)$ is a closed subspace of it, so
again a Banach algebra. Combining these results with the observation that $\rm{End}_{L[G]}(\Pi)$
is a closed sub-algebra of $\rm{End}_{L[H]}(\Pi)$ yields the desired result. The induced structure of
Fr\'echet algebra on $\emorph_{L[G]}(\Pi)$ is independent of $H$, as if $H_1\subset H_2$ are two compact
open subgroups of $G$, then $\mathcal{D}(H_1,L)$ (respectively $\mathcal{D}_r(H_1,L)$) is a finite free module over
$\mathcal{D}(H_2,L)$ (respectively $\mathcal{D}_r(H_2,L)$).

\end{proof}

 We also note a rather useful consequence of the previous theorem. This has already been observed
 in \cite{Pa}, lemma 4.20. There are some extra-assumptions in that reference, precisely because of the
 lack of the previous result. However, the reader can check that the proof in loc.cit. only uses
 the fact that $\rm{End}_{L[G]}(\Pi)$ is finite dimensional over $L$ and that it also adapts to the case
 of locally analytic admissible representations.

\begin{corollary}
 Let $G$ be a $\qp$-analytic group and let $\Pi$ be an irreducible admissible
 Banach space (resp. locally $\qp$-analytic) representation of $G$. There exist a finite Galois extension
 $L'/L$ and finitely many absolutely irreducible admissible Banach space (resp. locally analytic) representations
 $\Pi_1,\Pi_2...,\Pi_s$ over $L'$ such that $$\Pi\otimes_{L} L'\simeq \Pi_1\oplus \Pi_2\oplus...\oplus \Pi_s.$$
\end{corollary}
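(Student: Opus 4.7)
The plan is to reduce the statement to the lemma already established inside the proof of the absolute-irreducibility criterion, the essential new input being the finite-dimensionality of $\rm{End}_{L[G]}(\Pi)$ proved in the preceding theorem.

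Let $D=\rm{End}_{L[G]}(\Pi)$; by the preceding theorem this is a finite-dimensional $L$-division algebra, with center $Z$ a finite separable extension of $L$. First I would choose a finite Galois extension $L'/L$ containing $Z$ that splits $D$, i.e.\ such that $D\otimes_L L'$ is isomorphic to a finite product of matrix algebras over $L'$. Such an $L'$ exists: take a splitting field of the central simple $Z$-algebra $D$ over $Z$ and replace it by its Galois closure over $L$.

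Applied to this $L'$, the lemma proved inside the proof of the preceding proposition yields a decomposition
\[
\Pi\otimes_L L' \;\simeq\; \Pi_1^{\oplus m_1}\oplus \Pi_2^{\oplus m_2}\oplus \cdots \oplus \Pi_s^{\oplus m_s}
\]
into pairwise non-isomorphic irreducible admissible $L'$-representations (Banach or locally analytic, as appropriate), with multiplicities $m_i\geq 1$. It remains to verify that each $\Pi_i$ is absolutely irreducible, for which by the previous proposition it suffices to show $\rm{End}_{L'[G]}(\Pi_i)=L'$. I would compute $\rm{End}_{L'[G]}(\Pi\otimes_L L')$ in two ways. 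On the one hand, a standard base-change identification gives $\rm{End}_{L'[G]}(\Pi\otimes_L L')\simeq D\otimes_L L'$, which by the choice of $L'$ is a product of matrix algebras over $L'$. On the other hand, the decomposition above, together with the finite-dimensionality theorem applied over $L'$, yields $\rm{End}_{L'[G]}(\Pi\otimes_L L')\simeq \prod_{i=1}^s M_{m_i}(D_i)$ with $D_i=\rm{End}_{L'[G]}(\Pi_i)$ finite-dimensional $L'$-division algebras. Comparing the two Wedderburn decompositions forces $D_i=L'$ for every $i$, as required.

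The main obstacle is to rigorously justify the base-change identification $\rm{End}_{L'[G]}(\Pi\otimes_L L')\simeq D\otimes_L L'$, particularly in the locally analytic setting. In the Banach case one invokes theorem \ref{dual1}: $\Pi\otimes_L L'$ corresponds to $\Pi'\otimes_L L'$, which is finitely generated over $L'\otimes_{O_{L'}} O_{L'}[[H]]$, and the identification follows because $L\otimes_{O_L} O_L[[H]]\to L'\otimes_{O_{L'}} O_{L'}[[H]]$ is a finite free extension. In the locally analytic case one uses instead the anti-equivalence with coadmissible modules: the decomposition $\mathcal{D}(H,L')\simeq \mathcal{D}(H,L)\otimes_L L'$ at every level $\mathcal{D}_r$ (and flatness of these transition maps) produces the same identification for $\Pi'_{L'}$, and then for its endomorphisms. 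Once this compatibility is in hand, the comparison of Wedderburn decompositions above concludes the argument.
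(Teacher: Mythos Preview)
Your argument is correct. The paper does not supply its own proof of this corollary: it simply observes that the statement follows from the finite-dimensionality of $\rm{End}_{L[G]}(\Pi)$ and refers the reader to the proof of lemma~4.20 in \cite{Pa}, noting that the argument there goes through verbatim and adapts to the locally analytic case. Your reconstruction---choose a Galois splitting field $L'$ for the division algebra $D=\rm{End}_{L[G]}(\Pi)$, apply the decomposition lemma from the proof of the absolute-irreducibility criterion to write $\Pi\otimes_L L'$ as a direct sum of irreducibles, and then compare the two Wedderburn decompositions of $\rm{End}_{L'[G]}(\Pi\otimes_L L')\simeq D\otimes_L L'$ to force each $\rm{End}_{L'[G]}(\Pi_i)=L'$---is exactly the standard way to run this, and is presumably what \cite{Pa} does as well.

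One minor remark: the base-change identity $\rm{End}_{L'[G]}(\Pi\otimes_L L')\simeq \rm{End}_{L[G]}(\Pi)\otimes_L L'$ that you flag as the main obstacle is in fact more elementary than your discussion suggests, and does not require passing through duality or coadmissible modules. Since $L'/L$ is finite, extension of scalars gives $\Hom_{L'}(\Pi\otimes_L L',\Pi\otimes_L L')=\Hom_L(\Pi,\Pi\otimes_L L')=\Hom_L(\Pi,\Pi)\otimes_L L'$ already at the level of continuous $L$-linear maps between locally convex spaces; taking $G$-invariants on both sides (the $G$-action on $\Pi\otimes_L L'$ being through the first factor) yields the desired isomorphism directly, in both the Banach and the locally analytic settings.
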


\footnotesize

\noindent
CMLS \'Ecole Polytechnique\\
UMR CNRS 7640\\
F--91128 Palaiseau cedex, France\\
{\it Adresse e-mail :} {\ttfamily gabriel.dospinescu@math.polytechnique.fr}
\\

\noindent
Laboratoire de Math\'ematiques de Versailles\\
UMR CNRS 8100\\
45, avenue des \'Etats Unis - B\^atiment Fermat\\
F--78035 Versailles Cedex, France\\
{\it Adresse e-mail:} {\ttfamily benjamin.schraen@math.uvsq.fr}


\begin{thebibliography}{30}


\addtolength{\leftmargin}{0.2in}
\setlength{\itemindent}{-0.2in} 


\bibitem{AW} K.~Ardakov, S.J.~Wadsley, On irreducible representations of compact p-adic analytic groups,
preprint, 2011, available at http://www.maths.nottingham.ac.uk/personal/pmzka1/.

\bibitem{BB} L.~Berger, C.~Breuil, Sur quelques
repr\'esentations potentiellement cristallines de
$G_{\qp}$, Ast\'erisque 330, p. 155-211.

\bibitem{C2} P.~Colmez, La série principale unitaire de $\GL_2(\qp)$, Ast\'erisque 330 (2010à, p. 213--262.

\bibitem{C5} P.~Colmez-Repr\'esentations de $\GL_2(\qp)$ et
$(\varphi,\Gamma)$-modules, Ast\'erisque 330 (2010), p. 281-509.

\bibitem{C1} P.~Colmez-La s\'erie principale unitaire $p$-adique-vecteurs localement analytiques,
preprint.

\bibitem{DDMS}
J.D.~Dixon, M.P.F.~Du Sautoy, A.~Mann, D.~Segal, Analytic pro-$p$ groups, 2nd edition, Cambridge University Press, 1999.

\bibitem{D1} G.~Dospinescu, Actions infinit\'esimales
dans la correspondance de Langlands locale $p$-adique,
preprint, http://arxiv.org/abs/1102.4788.

\bibitem{D2} G.~Dospinescu, Sur l'image du foncteur de Colmez,
en pr\'eparation.

\bibitem{D3} G.~Dospinescu, \'Equations diff\'erentielles $p$-adiques et foncteurs
de Jacquet localement analytiques, en pr\'eparation.

\bibitem{Em5}M.~Emerton, Locally analytic vectors in representations of locally $p$-adic analytic groups,
to appear in Memoirs of the AMS.

\bibitem{Ka} I.~Kaplansky, Rings with a polynomial identity,
Bull. Amer. Math. Soc. Volume 54, Number 6 (1948), 575-580.

\bibitem{KiAst} M.~Kisin, Deformations of $G_{\qp}$
and $\rm GL_2(\qp)$ representations, Ast\'erisque 330,
2010, p. 511-528.

\bibitem{K1}
J.~Kohlhaase, Invariant distributions on $p$-adic analytic groups,
 Duke Math. J., \textbf{137} (2007), no.~1, p.~19--62.

\bibitem{Laz} M.~Lazard, Groupes analytiques $p$-adiques, Publ.Math.IHES 26(1965).

\bibitem{LXZ} R.~Liu, B.~Xie, Y.~Zhang, Locally Analytic Vectors of Unitary Principal Series of $\rm{GL}_2(\qp)$, preprint 2011.

\bibitem{Pa} V.~Pa\v{s}k\={u}nas, The image
of Colmez's Montr\'eal functor, preprint, available at
   http://www.math.uni-bielefeld.de/~paskunas/.

\bibitem{Qu} D.~Quillen, On the endomorphism ring of a simple module over an enveloping algebra, Proc. Amer. Math. Soc, 21 (1969), 171-172.

\bibitem{Schmidt} T.~Schmidt, Auslander regularity of $p$-adic distribution algebras, Representation Theory, 12 (2008), p. 37--57.

\bibitem{Sch} P.~Schneider, Nonarchimedean functional analysis, Springer Monographs in Mathematics,
Springer-Verlag, Berlin (2002).

\bibitem{ST1} P.~Schneider, J.~Teitelbaum, Banach space representations
and Iwasawa theory, Israel J. Math. 127, 359-380 (2002).

\bibitem{ST2} P.~Schneider, J.~Teitelbaum,
Locally analytic distributions and $p$-adic representation
theory, with applications to $GL_2$, J. Amer. Math. Soc
15(2002), p. 443-468.

\bibitem{ST3} P.~Schneider et J.~Teitelbaum,
Algebras of $p$-adic distributions and admissible
representations, Invent. Math. 153 (2003), p. 145-196.

\bibitem{ST4} P.~Schneider, J.~Teitelbaum, Locally analytic distributions and $p$-
adic representation theory, with applications to $\GL_2$, Journ. AMS 15,
2002, p. 443-468.

\bibitem{ST5}
P.~Schneider, J.~Teitelbaum, ${U}(\mathfrak{g})$-finite locally analytic representations, Represent. Theory 5 (2001), p.~111--128.

\end{thebibliography}
\end{document}